\documentclass[a4paper,fleqn]{cas-dc}

\usepackage[numbers,sort&compress]{natbib}
\usepackage{mathrsfs}
\usepackage{amsmath}
\usepackage{amsfonts}
\usepackage{amssymb}
\usepackage{amsthm}
\usepackage{enumerate}
\usepackage{setspace}
\usepackage{bbm}
\usepackage{color}
\usepackage{graphicx}
\usepackage[nameinlink,noabbrev]{cleveref}
\hypersetup{
    colorlinks=true,
    linkcolor=blue,
    citecolor=red,
    urlcolor=blue
}

\makeatletter
\renewcommand{\fnum@figure}{Fig. \thefigure.\@gobble}
\makeatother

\newtheorem{theorem}{Theorem}[section]
\newtheorem{proposition}[theorem]{Proposition}
\newtheorem{lemma}[theorem]{Lemma}
\newtheorem{remark}[theorem]{Remark}

\crefname{figure}{Fig.}{Figs.}
\crefname{subsection}{Subsection}{Subsection}
\crefname{theorem}{Theorem}{Theorems}
\crefname{example}{Example}{Examples}
\crefname{proposition}{Proposition}{Propositions}
\crefname{corollary}{Corollary}{Corollaries}
\crefname{definition}{Definition}{Definitions}
\crefname{lemma}{Lemma}{Lemmas}
\crefname{remark}{Remark}{Remarks}
\crefname{assumption}{Assumption}{Assumptions}
\crefname{problem}{Problem}{Problems}

\begin{document}

\shorttitle{Mean-variance portfolio selection: mean-field and forward-backward systems with initial-terminal coupling}
\shortauthors{X. Li et al.}

\title[mode=title]{Mean-variance portfolio selection: connections between mean-field systems and forward-backward stochastic systems under initial-terminal cost coupling}

\author[1]{Xun Li}
\credit{Conceptualization, Formal analysis, Methodology, Validation, Writing-review \& editing}

\author[2]{Hongyu Shi}[orcid=0009-0008-8371-5103]
\credit{Conceptualization, Formal analysis, Funding acquisition, Investigation, Methodology, Validation, Writing-original draft, Writing-review \& editing}
\cormark[1]

\author[2]{Zhen Wu}
\credit{Conceptualization, Funding acquisition, Project administration, Supervision, Writing-review \& editing}

\cortext[1]{Corresponding author.\\
\textit{E-mail address:}
\href{mailto:202112026@mail.sdu.edu.cn}
{202112026@mail.sdu.edu.cn} (H. Shi).}

\affiliation[1]{
  organization={Hong Kong Polytechnic University},
  addressline={Clear Water Bay},
  city={Kowloon},
  state={Hong Kong},
  country={China}
}

\affiliation[2]{
  organization={Shandong University},
  addressline={Shanda South Road No. 27},
  city={Jinan},
  postcode={250100},
  state={Shandong},
  country={China}
}

\begin{abstract}
This paper investigates the connection between mean-field systems and forward-backward stochastic systems arising from the mean-variance portfolio selection problem. By introducing the conditional expectation process of terminal wealth, the mean-field term is represented by a backward state, leading to a class of control problems for forward-backward stochastic systems with coupled initial-terminal values. The coupling between the terminal value of the forward state and the initial value of the backward state in the cost functional prevents classical methods for forward-backward stochastic optimal control from being directly applicable. To overcome this difficulty, we establish a stochastic maximum principle for control problems with running costs and coupled initial-terminal values, and provide sufficient conditions for optimality. Furthermore, we investigate a class of generalized stochastic linear-quadratic control problems with initial-terminal coupling, allowing the control weight in the running cost to be indefinite, and derive the optimal feedback representation via a Riccati equation. The classical mean-variance portfolio selection model is shown to be a special case, for which the corresponding efficient investment strategy and efficient frontier are obtained explicitly, and numerical simulations are presented to illustrate the results and examine parameter sensitivity and its financial implications.
\end{abstract}

\begin{highlights}
  \item Linking mean-field systems and forward-backward stochastic systems.
  \item Conditional expectation converts mean-field terms into a backward state.
  \item Optimal control with initial-terminal coupling in the cost functional.
  \item Maximum principles for forward-backward stochastic control with coupled costs.
  \item Indefinite stochastic linear-quadratic control for mean-variance portfolio selection.
\end{highlights}

\begin{keywords}
Mean-variance portfolio selection \sep Mean-field systems \sep Forward-backward stochastic differential equations \sep Initial-terminal coupling \sep Stochastic maximum principle
\end{keywords}

\maketitle

\section{Introduction}\label{sec1}

Markowitz \cite{M1952} initiated mean-variance portfolio theory. With the development of financial theory and stochastic control, dynamic mean-variance portfolio selection has gradually developed into a systematic research framework \cite{M1969,LN2000,ZL2000}, and has been further extended to settings with random coefficients and more general market structures \cite{NLZK2020,WLXZ2023,CCK2024}. For the classical mean-variance model, suppose that $d + 1$ assets  are continuously traded in the market. One of the assets is a bond, whose price process $P^0_{\cdot}$ satisfies the following ordinary differential equation:
\begin{equation}
  \label{eq001}
  \left\{
    \begin{aligned}
      &\mathrm{d}P^0_t=r_tP^0_t\mathrm{d}t,\quad t\in[0,T],\\
      &P^0_0 = p_0>0,
    \end{aligned}
  \right.
\end{equation}
where $r_{\cdot} > 0$ denotes the risk-free interest rate associated with the bond. The remaining $d$ assets are stocks, whose price processes $P^1_{\cdot},P^2_{\cdot},$ $\cdots,P^d_{\cdot}$ satisfy the following stochastic differential equations (SDEs):
\begin{equation}
  \label{eq002}
  \left\{
    \begin{aligned}
      &\mathrm{d}P^i_t = P^i_t\left(b^i_t\mathrm{d}t+\sum^d_{j=1}\sigma^{ij}_t\mathrm{d}W^j_t\right),\quad t\in[0,T],\\
      &P^i_0 = p_i>0,
    \end{aligned}
  \right.
\end{equation}
where $b^i_{\cdot}$ denotes the appreciation rate of the $i$th stock, and $\sigma_{\cdot}:=\left(\sigma^{ij}_{\cdot}\right)_{d\times d}$ denotes the volatility matrix of the stocks. We assume that $r_{\cdot},b^i_{\cdot},\sigma_{\cdot}$ are uniformly bounded measurable functions and that $\sigma_{\cdot}$ is nondegenerate, i.e., $\sigma_{\cdot}\sigma^{\top}_{\cdot}\gg 0$. Consider an investor who holds $N^i_t$ units of the $i$th asset at time $t$, where $i = 0, 1,\cdots, d$. Combining \eqref{eq001} and \eqref{eq002}, the investor's total wealth process $x_{\cdot}$ satisfies
\begin{equation}
  \label{eq003}
  \left\{
    \begin{aligned}
      &\mathrm{d}x_t = \sum^d_{i=0} N^i_t\mathrm{d}P^i_t= \left\{r_tx_t+\sum^d_{i=1}(b^i_t-r_t)u^i_t\right\}\mathrm{d}t\\
      &\qquad\qquad\qquad\qquad+\sum^d_{i=1}\sum^d_{j=1}\sigma^{ij}_tu^i_t\mathrm{d}W^j_t,t\in[0,T],\\
      &x_0 = a>0,
    \end{aligned}
  \right.
\end{equation}
where $u_{\cdot}^i=N^i_{\cdot}P^i_{\cdot}$, $i=0,1,\cdots,d$, denotes the total market value invested in the $i$th asset. We call $u_{\cdot} = (u^1_{\cdot},u^2_{\cdot},\cdots,u^d_{\cdot})^{\top}$ the investor's portfolio. The investor aims to maximize the mean terminal wealth $\mathbb{E}x_T$ while minimizing the variance of terminal wealth $Var [x_T] = \mathbb{E}[x_T-\mathbb{E}x_T]^2$. This is a multi-objective optimization problem involving two conflicting criteria. The objective can be formulated as a weighted combination of the mean and variance. Specifically, subject to $(x_{\cdot},u_{\cdot})$ satisfying \eqref{eq003}, the mean-variance portfolio selection problem can be formulated as minimizing
\begin{equation}
  \label{eq004}
  Var[x_T]-\lambda \mathbb{E}x_T = \mathbb{E}\big[(x_T-\mathbb{E}x_T)^2\big]-\lambda \mathbb{E}x_T,
\end{equation}
where $\lambda \geq 0$ is a parameter. The optimal portfolio $\bar{u}_{\cdot}$ is called an efficient strategy, and the corresponding $(\mathbb{E}\bar{x}_T, Var[\bar{x}_T])$ is called an efficient point. As $\lambda$ varies over $[0, +\infty)$, the set of all efficient points forms the efficient frontier. Zhou and Li \cite{ZL2000} transformed the continuous-time mean-variance problem into an auxiliary stochastic linear-quadratic (LQ) control problem by means of an embedding technique; Li, Zhou, and Lim \cite{LZL2002} employed stochastic LQ control together with the Hamilton--Jacobi--Bellman equation and viscosity solution theory to derive the efficient strategy and efficient frontier. On the other hand, due to the time inconsistency induced by the mean term, equilibrium control has become an important line of research for dynamic mean-variance problems \cite{HJ2022,ZJWY2022}. Meanwhile, backward stochastic differential equation (BSDE) approaches and data-driven methods have also been applied to such problems \cite{ZL2023,WL2024}.

Combining \eqref{eq003} and \eqref{eq004}, one can see that the portfolio optimization problem is essentially a mean-field control problem for a forward stochastic system. The stochastic maximum principle (SMP) and dynamic programming theory for this class of problems have been developed in fairly general settings \cite{AD2011,BDL2011,ABC2019,PW2017,DPT2022}. In the LQ setting, Riccati equations have been used to characterize optimal controls and their feedback forms, and further developments include open-loop and closed-loop solvability as well as control problems with random coefficients \cite{Y2013,LSY2016,S2017,SW2021,XX2025}. It is worth noting, however, that the mean-field term $[\mathbb{E}x_T]^2$ in \eqref{eq004} makes the objective functional nonseparable in the sense of dynamic programming, which makes the problem difficult to solve directly (see \cite{ZL2000}). To eliminate the mean-field term from the optimization objective, we set $y_t := \mathbb{E}[x_T | \mathcal{F}_t], 0\leq t \leq T$. It is clear that $y_{\cdot}$ is an $\mathcal{F}_t$-martingale and that $y_0 = \mathbb{E}x_T$. Hence, by the martingale representation theorem, together with \eqref{eq003} and \eqref{eq004}, the forward mean-field control problem associated with the mean-variance model can be transformed into the following control problem for a forward-backward stochastic system:
\begin{equation}
  \label{eq005}
  \begin{aligned}
    & \text{Minimize }\text{ } \mathbb{E}\big[(x_T-y_0)^2-\lambda x_T\big],\\
    & \text{subject to }
    \left\{
      \begin{aligned}
      &\mathrm{d}x_t = \left\{r_tx_t+\sum^d_{i=1}(b^i_t-r_t)u^i_t\right\}\mathrm{d}t\\
      &\qquad\qquad+\sum^d_{i=1}\sum^d_{j=1}\sigma^{ij}_tu^i_t\mathrm{d}W^j_t,\\
      &\mathrm{d}y_t = \sum^d_{j=1}z^{j}_t\mathrm{d}W^j_t,\\
      &x_0 = a, \quad y_T = x_T.
    \end{aligned}
    \right.
  \end{aligned}
\end{equation}

Results on the well-posedness of forward-backward stochastic differential equations (FBSDEs) and the dependence of their solutions on parameters have been established \cite{PP1992,W1998a}. Stochastic control theory for FBSDEs has also been extended to more general settings, including general control domains and fully coupled systems \cite{W1998,SW2006,HJX2018}. In the LQ framework, solvability conditions can be further established by means of the corresponding Riccati-type equations and decoupling methods, and explicit expressions for the optimal controls can be obtained \cite{HLS2012,WWX2015,HJX2023}. In addition, such models can also be applied to problems in insurance and financial decision-making \cite{SW2010,HWW2010,WZ2018,WYZ2024}. Inspired by the above motivations, this paper studies the dynamics of forward-backward systems with coupled mixed conditions. Specifically, we consider a class of control problems for forward-backward systems with coupled initial-terminal values, where the cost functional is given by
\begin{equation*}
  J(u_{\cdot}) = \mathbb{E}\left[\int^T_0f(t,x_t,y_t,z_t,u_t)\mathrm{d}t+\Phi(x_T,y_0)\right].
\end{equation*}
However, as can be seen from \eqref{eq005}, our model not only involves the case of an indefinite stochastic LQ control problem, but also contains a fully coupled term between the terminal value $x_T$ of the forward process and the initial value $y_0$ of the backward process. This renders the traditional methods for forward-backward stochastic optimal control problems no longer applicable; see \cite{W1998,SW2006,P1993}. Therefore, the research content of this paper is highly challenging.

The main contributions of this paper are summarized as follows:

\noindent (i) Starting from the mean-variance portfolio selection problem, this paper introduces the conditional expectation process of terminal wealth and represents the mean-field term in the objective functional as a backward state. In turn, a connection between mean-field systems and forward-backward stochastic systems is established, and the coupling between the terminal value of the forward state and the initial value of the backward state induced by the mean term is revealed.

\noindent (ii) We study a class of control problems for forward-backward systems with coupled initial-terminal values. By introducing the corresponding variational equation and adjoint equations, we derive a stochastic maximum principle (SMP) and further establish sufficient conditions for optimality under appropriate convexity conditions. The result applies to a general cost functional with a running cost, rather than being restricted to the case where the cost contains only initial-terminal terms.

\noindent (iii) This paper further studies a class of generalized stochastic LQ control problems whose cost functional contains both a running cost and an initial-terminal coupling term, while relaxing the uniform positive definiteness condition on the control weight $R_{\cdot}$. By means of a Riccati equation, we derive a feedback representation of the optimal control. The classical mean-variance model is included as a special case of this framework, for which the efficient investment strategy, efficient frontier, and corresponding numerical results are obtained, with sensitivity analysis revealing how parameter changes affect optimal investment decisions.

The remainder of this paper is organized as follows. We first study a class of control problems for forward-backward systems with coupled initial-terminal values and establish the corresponding SMP in \Cref{sec2}. The analysis then turns to generalized stochastic LQ control problems in \Cref{sec3}, where the results are applied to the classical mean-variance portfolio selection problem. The paper concludes with \Cref{sec4}.

\section{A control problem with coupled initial-terminal values}\label{sec2}
Let $T>0$ and $(\Omega, \mathcal{F}, \mathbb F, P)$ be a filtered complete probability space. On this space, there is a $d$-dimensional standard Brownian motion $W_{\cdot}=(W_{\cdot}^1,W_{\cdot}^2,\cdots,W_{\cdot}^d)^{\top}$. $\mathbb{F}={{\left\{ {{\mathcal{F}}_{t}} \right\}}_{0\le t\le T}}$ is a natural filtration generated by $\{W_s,0\leq s\leq t\}$ and augmented by all $\mathbb{P}$-null sets. $\mathcal G$ is the progressive $\sigma$-field on $\Omega\times [0,T]$. $\mathbb{R}^{m\times n}$ is the $(m\times n)$-dimensional Euclidean space. $|\cdot|$ and $\langle \cdot,\cdot\rangle$ denote the norm and scalar product in the Euclidean space. $D^{\top}$ ($D^{-1}$) denotes the transpose (inverse) of matrix $D$. $\mathbb S^n$ denotes the set of symmetric $n\times n$ matrices with real elements. If $D\in \mathbb{S}^n$ is positive definite (positive semi-definite, uniformly positive definite), we write $D>(\geq,\gg)0$. For a given Hilbert space $\mathbb H$, $C([0,T];\mathbb H)$ denotes the space of all $\mathbb H$-valued continuous functions $\eta_{\cdot}$ on $[0, T]$ such that $\max_{0\leq t\leq T}|\eta_t| < \infty$; $L^2(\Omega;\mathbb H)$ denotes the space of all $\mathbb H$-valued random variables $\zeta$ such that $\mathbb{E}[|\zeta|^2] < \infty$; $L^2_{\mathcal{F}_T}(\mathbb{H})$ denotes the set of all ${\mathcal{F}_T}$ measurable random variables $\vartheta \in L^2(\Omega;\mathbb H)$; $S^2_{\mathbb{F}}(0,T;\mathbb{H})$ denotes the space of all $\mathbb H$-valued, $\mathcal{F}_t$-adapted c\`adl\`ag processes $\phi_{\cdot}$ such that $ \mathbb{E}[\sup_{0\leq t\leq T}|\phi_t|^2] < \infty$; $L^2_{\mathbb{F}}(0,T;\mathbb{H})$ denotes the space of all $\mathbb H$-valued, $\mathcal G$-measurable processes $\xi_{\cdot}$ such that $ \mathbb{E}[\int_{0}^{T}|\xi_t|^2\mathrm{d}t] < \infty$. 

Next, we investigate a class of control problems for forward-backward systems with coupled initial-terminal values, of which \eqref{eq005} is a special case. Let $U \subseteq \mathbb{R}^k$ be non-empty convex set. We consider the forward-backward system:
\begin{equation}
  \label{eq006}
  \left\{
    \begin{aligned}
      &\mathrm{d}x_t = b(t,x_t,u_t)\mathrm{d}t + \sigma (t,x_t,u_t)\mathrm{d}W_t,\\
      &\mathrm{d}y_t = -g(t,x_t,y_t,z_t,u_t)\mathrm{d}t + z_t\mathrm{d}W_t,\\
      &x_0 = a,\quad y_T = h(x_T),
    \end{aligned}
  \right.
\end{equation}
where $(b,\sigma):\Omega \times [0,T]\times {{\mathbb{R}}^{n}}\times  U \to {{\mathbb{R}}^{n}}\times {{\mathbb{R}}^{n\times d}}$, $g:\Omega \times [0,T]\times {{\mathbb{R}}^{n}}\times \mathbb{R}^m \times \mathbb{R}^{m\times d} \times  U \to {{\mathbb{R}}^{m}}$, $h:\Omega\times \mathbb R^n\rightarrow \mathbb R^m$ and $a \in \mathbb{R}^n$. Define the admissible control set $\mathcal U_{ad}:=\{u| u_{\cdot} \in L^2_{\mathbb{F}}(0,T;U)\}$. Then we give some assumptions to ensure the well-posedness of the control system as follows.

\noindent \textbf{Assumption (H1)} i) $(b,\sigma)$ is $\mathcal{G}\otimes \mathcal{B}({{\mathbb{R}}^{n}})\otimes{{\mathcal{U}}_{ad}}/\mathcal{B}({{\mathbb{R}}^{n}})\otimes \mathcal{B}({{\mathbb{R}}^{n\times d}})$ measurable. $g$ is $\mathcal{G}\otimes \mathcal{B}({{\mathbb{R}}^{n}})\otimes \mathcal{B}({{\mathbb{R}}^{m}})\otimes \mathcal{B}({{\mathbb{R}}^{m\times d}})\otimes{{\mathcal{U}}_{ad}}/\mathcal{B}({{\mathbb{R}}^{m}})$ and $h$ is $\mathcal F_T\otimes \mathcal B(\mathbb R^n)/\mathcal B(\mathbb R^m)$ measurable.

\noindent ii) $b,\sigma,g,h$ are continuously differentiable w.r.t $(x,y,z,u)$, their partial derivatives are uniformly bounded and continuous w.r.t $(x,y,z,u)$.

\noindent iii) $E[(\int_0^T\!(|b(t,0,0)|+|g(t,0,0,0,0)|)\mathrm{d}t)^2+\int_0^T\!|\sigma(t,0,0)|^2\mathrm{d}t$ $+|h(0)|^2]<\infty$.

Under (H1), for admissible control $u_{\cdot}\in \mathcal U_{ad}$, it is easy to check that FBSDE \eqref{eq006} admits a unique solution $(x_{\cdot},y_{\cdot},z_{\cdot}) \in S^2_{\mathbb{F}}(0,T;\mathbb{R}^{n})\times S^2_{\mathbb{F}}(0,T;\mathbb{R}^{m})\times L^2_{\mathbb{F}}(0,T;\mathbb{R}^{m\times d})$. Next, we introduce the cost functional with coupled initial-terminal values as follows.
\begin{equation}
  \label{eq007}
  J(u_{\cdot}) = \mathbb{E}\left[\int^T_0f(t,x_t,y_t,z_t,u_t)\mathrm{d}t+\Phi(x_T,y_0)\right],
\end{equation}
where $f:\Omega \times [0,T]\times {{\mathbb{R}}^{n}}\times \mathbb{R}^m \times \mathbb{R}^{m\times d} \times  U \to \mathbb{R}$, $\Phi:\Omega\times \mathbb R^n\times \mathbb R^m\rightarrow \mathbb{R}$. Here we introduce some assumptions.

\noindent \textbf{Assumption (H2)} i) $f$ is $\mathcal{G}\otimes \mathcal{B}({{\mathbb{R}}^{n}})\otimes \mathcal{B}({{\mathbb{R}}^{m}})\otimes \mathcal{B}({{\mathbb{R}}^{m\times d}})\otimes{{\mathcal{U}}_{ad}}/\mathcal{B}({{\mathbb{R}}})$ and $\Phi$ is $\mathcal F_T\otimes \mathcal B(\mathbb R^n)\otimes \mathcal B(\mathbb R^m)/\mathcal B(\mathbb R)$ measurable.

\noindent ii) $f$ and $\Phi$ are continuously differentiable w.r.t $(x,y,z,u)$, their partial derivatives are continuous w.r.t $(x,y,z,u)$.

\noindent iii) There exists a constant $C>0$ such that $|f(t,x,y,z,u)|\leq C (1+|x|^2+|y|^2+|z|^2+|u|^2)$, $|(f_x,f_y,f_z,f_u)(t,x,y,z,u)|\leq C (1+|x|+|y|+|z|+|u|)$, $|\Phi(x,y)| \leq C(1+|x|^2+|y|^2)$, $|(\Phi_{x},\Phi_{y})(x,y)|\leq C(1+|x|+|y|)$.

Now, the optimal control problems for forward-backward systems with coupled initial-terminal values can be described as follows.

\noindent \textbf{Problem (FBIT)} Find an admissible control $\bar u_{\cdot}\in \mathcal U_{ad}$ such that
\begin{equation*}
  J(\bar u_{\cdot}) = \inf_{u_{\cdot}\in \mathcal U_{ad}} J(u_{\cdot}).
\end{equation*}

\subsection{Variational analysis}
\label{subsec3_1}
In this part, we derive the variational inequality for the cost functional with coupled initial-terminal values \eqref{eq007}. Let $\bar u_{\cdot}$ be an optimal control for the Problem (FBIT), and the corresponding  trajectories are denoted by $(\bar x_{\cdot},\bar y_{\cdot},\bar z_{\cdot})$. Let $v_{\cdot}\in \mathcal U_{ad}$ be another admissible control. Then $u^{\epsilon}_{\cdot}:=(1-\epsilon)\bar u_{\cdot}+\epsilon v_{\cdot}\in \mathcal U_{ad}$ for $0\leq \epsilon\leq 1$ because of the convexity of the admissible control set and the corresponding  trajectories are denoted by $(x^{\epsilon}_{\cdot},y^{\epsilon}_{\cdot}, z^{\epsilon}_{\cdot})$. Next, we introduce the following variational equation:
\begin{equation}
  \label{eq008}
  \left\{
    \begin{aligned}
      &\mathrm{d}x^1_t = \big(b_t^xx^1_t+b_t^u\delta \bar u_t\big)\mathrm{d}t + \big(\sigma_t^xx^1_t+\sigma_t^u\delta \bar u_t\big)\mathrm{d}W_t,\\
      &\mathrm{d}y^1_t = -\big(g_t^xx^1_t+g_t^yy^1_t+g_t^zz^1_t+g_t^u\delta \bar u_t\big)\mathrm{d}t + z^1_t\mathrm{d}W_t,\\
      &x^1_0 = 0, \quad y^1_T = h_T^xx^1_T,
    \end{aligned}
  \right.
\end{equation}
where $\delta \bar u_{\cdot}=v_{\cdot}-\bar u_{\cdot}$, $b^{\tau}_{\cdot}=b_{\tau}({\cdot},\bar x_{\cdot},\bar u_{\cdot})$, $\sigma^{\tau}_{\cdot}=\sigma_{\tau}({\cdot},\bar x_{\cdot},\bar u_{\cdot})$, $g^{\tau}_{\cdot}=g_{\tau}({\cdot},\bar x_{\cdot},\bar y_{\cdot},\bar z_{\cdot},\bar u_{\cdot})$, $h^{x}_T=h_{x}(\bar x_T)$ with $\tau =x,y,z,u$. Under (H1), for admissible control $v_{\cdot}\in \mathcal U_{ad}$, it is easy to see that FBSDE \eqref{eq008} admits a unique solution $(x^1_{\cdot},y^1_{\cdot},z^1_{\cdot}) \in S^2_{\mathbb{F}}(0,T;\mathbb{R}^{n})\times S^2_{\mathbb{F}}(0,T;\mathbb{R}^{m})\times L^2_{\mathbb{F}}(0,T;\mathbb{R}^{m\times d})$. 

For simplicity, we set $(\Delta x^{\epsilon}_{\cdot},\Delta y^{\epsilon}_{\cdot},\Delta z^{\epsilon}_{\cdot}):=(\frac{x^{\epsilon}_{\cdot}-\bar x_{\cdot}}{\epsilon}-x_{\cdot}^1, \frac{y^{\epsilon}_{\cdot}-\bar y_{\cdot}}{\epsilon}-y_{\cdot}^1,\frac{z^{\epsilon}_{\cdot}-\bar z_{\cdot}}{\epsilon}-z_{\cdot}^1)$. In order to get the variational equation for the cost functional, we require the following lemma. The detailed proof can be found in Lemma 4.1 of \cite{P1993}.
\begin{lemma}
  \label{le3_1}
  Under (H1), we have
  \begin{equation*}
    \begin{aligned}
      &\underset{\epsilon \to 0}{\mathop{\lim }}\,\sup_{0\leq t\leq T}\mathbb{E}|\Delta x^{\epsilon}_t|^2 = 0,\quad \underset{\epsilon \to 0}{\mathop{\lim }}\,\sup_{0\leq t\leq T}\mathbb{E}|\Delta y^{\epsilon}_t|^2 = 0,\\
      &\underset{\epsilon \to 0}{\mathop{\lim }}\,\mathbb{E}\int^T_0|\Delta z^{\epsilon}_t|^2\mathrm{d}t = 0.
    \end{aligned}
  \end{equation*}
\end{lemma}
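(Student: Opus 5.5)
The plan is to treat the forward and backward components one after the other, since the system \eqref{eq006} is only decoupled-forward: $x$ does not depend on $(y,z)$. So I first prove the estimate for $\Delta x^{\epsilon}$ and then feed it into the backward equation.

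\textbf{Forward component.} First I show that $\sup_t\mathbb{E}|x^{\epsilon}_t-\bar x_t|^2\le C\epsilon^2\,\mathbb{E}\int_0^T|\delta\bar u_t|^2\mathrm{d}t$. This follows from Itô's formula, the uniform Lipschitz bounds in (H1), and Gronwall's lemma. Next I write
\begin{equation*}
b(t,x^{\epsilon}_t,u^{\epsilon}_t)-b(t,\bar x_t,\bar u_t)=\tilde b^{x,\epsilon}_t(x^{\epsilon}_t-\bar x_t)+\epsilon\,\tilde b^{u,\epsilon}_t\delta\bar u_t ,
\end{equation*}
where $\tilde b^{\tau,\epsilon}_t=\int_0^1 b_\tau(t,\bar x_t+\theta(x^{\epsilon}_t-\bar x_t),\bar u_t+\theta\epsilon\delta\bar u_t)\mathrm{d}\theta$; $\sigma$ is treated in the same way. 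Then $\Delta x^{\epsilon}$ solves a linear SDE:
\begin{equation*}
\mathrm{d}\Delta x^{\epsilon}_t=\big(\tilde b^{x,\epsilon}_t\Delta x^{\epsilon}_t+A^{\epsilon}_t\big)\mathrm{d}t+\big(\tilde\sigma^{x,\epsilon}_t\Delta x^{\epsilon}_t+B^{\epsilon}_t\big)\mathrm{d}W_t,\qquad \Delta x^{\epsilon}_0=0,
\end{equation*}
with
\begin{equation*}
A^{\epsilon}_t=(\tilde b^{x,\epsilon}_t-b^x_t)x^1_t+(\tilde b^{u,\epsilon}_t-b^u_t)\delta\bar u_t ,
\end{equation*}
and $B^{\epsilon}_t$ defined analogously. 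The standard $L^2$ estimate gives $\sup_t\mathbb{E}|\Delta x^{\epsilon}_t|^2\le C\,\mathbb{E}\int_0^T(|A^{\epsilon}_t|^2+|B^{\epsilon}_t|^2)\mathrm{d}t$. The right-hand side tends to $0$ by dominated convergence:
\begin{itemize}
\item the derivative differences are bounded;
\item they converge to $0$ in measure, because $x^{\epsilon}\to\bar x$ in $L^2$ and the derivatives are continuous;
\item they multiply the fixed square-integrable processes $x^1$ and $\delta\bar u$.
\end{itemize}

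\textbf{Backward component.} I proceed in the same way. I first establish the crude bound
\begin{equation*}
\mathbb{E}\Big[\sup_t|y^{\epsilon}_t-\bar y_t|^2+\int_0^T|z^{\epsilon}_t-\bar z_t|^2\mathrm{d}t\Big]=O(\epsilon^2).
\end{equation*}
This comes from the standard BSDE a priori estimate, using the Lipschitz continuity of $g$ and $h$ together with the forward bound. Then $(\Delta y^{\epsilon},\Delta z^{\epsilon})$ satisfies a linear BSDE with averaged coefficients $\tilde g^{\tau,\epsilon}$. Its terminal value is
\begin{equation*}
\tilde h^{x,\epsilon}_T\Delta x^{\epsilon}_T+(\tilde h^{x,\epsilon}_T-h^x_T)x^1_T ,
\end{equation*}
and its driver contains the residual
\begin{equation*}
(\tilde g^{x,\epsilon}_t-g^x_t)x^1_t+(\tilde g^{y,\epsilon}_t-g^y_t)y^1_t+(\tilde g^{z,\epsilon}_t-g^z_t)z^1_t+(\tilde g^{u,\epsilon}_t-g^u_t)\delta\bar u_t
\end{equation*}
plus the term $\tilde g^{x,\epsilon}_t\Delta x^{\epsilon}_t$. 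The a priori estimate for linear BSDEs with bounded coefficients bounds $\sup_t\mathbb{E}|\Delta y^{\epsilon}_t|^2+\mathbb{E}\int_0^T|\Delta z^{\epsilon}_t|^2\mathrm{d}t$ by the $L^2$ norms of the terminal value and of the driver residual. Each of these tends to zero, by the forward result and by dominated convergence as before.

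\textbf{Main obstacle.} The delicate point is the terminal term $\mathbb{E}|\tilde h^{x,\epsilon}_T\Delta x^{\epsilon}_T|^2$. Here a bound on $\sup_t\mathbb{E}$ suffices, since it is evaluated at the fixed time $T$. Another delicate point is that the dominated-convergence arguments require convergence in probability of the averaged derivatives. This holds along any sequence $\epsilon_k\to0$ (extract a.e.-convergent subsequences), and then a subsequence-of-subsequence argument yields the full limit.
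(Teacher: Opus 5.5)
Your proposal is correct and is essentially the standard argument the paper relies on: the paper gives no proof of its own and simply cites Lemma 4.1 of Peng's 1993 paper. Your argument matches that route: a first-order expansion with averaged derivatives, $L^2$ estimates for the resulting linear SDE and then the linear BSDE (fed by the forward result), and dominated convergence, via subsequences, for the derivative-difference terms multiplying $x^1, y^1, z^1, \delta\bar u$.
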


With the help of \Cref{le3_1}, we can obtain the following variational inequality for the cost functional with coupled initial-terminal values \eqref{eq007}. To simplify notation, we set $f^{\tau}_{\cdot}=f_{\tau}({\cdot},\bar x_{\cdot},\bar y_{\cdot},\bar z_{\cdot},\bar u_{\cdot})$, $\Phi^{x}_T=\Phi_{x}(\bar x_T,\bar y_0), \Phi^{y}_0=\Phi_{y}(\bar x_T,\bar y_0)$ with $\tau =x,y,z,u$.
\begin{lemma}
  \label{le3_2}
  Under (H1)-(H2), we have
  \begin{equation*}
    \begin{aligned}
      &\quad \text{ }\underset{\epsilon \to 0}{\mathop{\lim }}\,\frac{J(u^{\epsilon}_{\cdot})-J(\bar u_{\cdot})}{\epsilon } \\
      &=\mathbb{E}\bigg[\int^T_0\big((f^x_t)^{\top}x^1_t+(f^y_t)^{\top}y^1_t+(f^z_t)^{\top}z^1_t+(f^u_t)^{\top}\delta \bar u_t\big)\mathrm{d}t\\
      &\qquad+(\Phi^{x}_T)^{\top}x^1_T+(\Phi^{y}_0)^{\top}y^1_0\bigg]\geq 0.
    \end{aligned}
  \end{equation*}
\end{lemma}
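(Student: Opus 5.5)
We split the difference quotient into the running-cost part and the initial-terminal part. By the mean value theorem in integral form,
\begin{equation*}
\frac{J(u^{\epsilon}_{\cdot})-J(\bar u_{\cdot})}{\epsilon}=\mathbb{E}\int_0^T\!\!\int_0^1 \Big\langle \tilde f^{\epsilon,\theta}_t,\ \Big(\tfrac{x^\epsilon_t-\bar x_t}{\epsilon},\tfrac{y^\epsilon_t-\bar y_t}{\epsilon},\tfrac{z^\epsilon_t-\bar z_t}{\epsilon},\delta\bar u_t\Big)\Big\rangle \mathrm{d}\theta\,\mathrm{d}t + \mathbb{E}\int_0^1\Big\langle \tilde\Phi^{\epsilon,\theta},\Big(\tfrac{x^\epsilon_T-\bar x_T}{\epsilon},\tfrac{y^\epsilon_0-\bar y_0}{\epsilon}\Big)\Big\rangle\mathrm{d}\theta .
\end{equation*}
Here $\tilde f^{\epsilon,\theta}_t$ denotes the vector $(f_x,f_y,f_z,f_u)$ evaluated at $\bar\cdot+\theta(\cdot^\epsilon-\bar\cdot)$. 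Similarly, $\tilde\Phi^{\epsilon,\theta}$ denotes $(\Phi_x,\Phi_y)$ evaluated at $(\bar x_T+\theta(x^\epsilon_T-\bar x_T),\ \bar y_0+\theta(y^\epsilon_0-\bar y_0))$. Note that $u^\epsilon-\bar u=\epsilon\delta\bar u$ exactly.

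We then write each difference quotient as $x^1_t+\Delta x^\epsilon_t$ (and similarly for $y$ and $z$). The target expression arises from replacing $\tilde f^{\epsilon,\theta}$ by $f^\tau_t$ and the quotients by $(x^1,y^1,z^1)$. The error splits into two types of terms. \emph{Type one} is $\langle \tilde f^{\epsilon,\theta}, \Delta\cdot^\epsilon\rangle$. It is bounded via Cauchy--Schwarz by $\|\tilde f^{\epsilon,\theta}\|_{L^2}\,\|\Delta\cdot^\epsilon\|_{L^2}$. The first factor is uniformly bounded by the linear growth in (H2)iii), using the standard estimate $\sup_t\mathbb{E}|x^\epsilon_t-\bar x_t|^2+\cdots\le C\epsilon^2\,\mathbb{E}\int|\delta\bar u|^2$ and the $L^2$ bounds on the states. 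The second factor tends to $0$ by \Cref{le3_1}. \emph{Type two} is $\langle \tilde f^{\epsilon,\theta}-f^\tau, (x^1,y^1,z^1,\delta\bar u)\rangle$.

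For the terminal part the same decomposition applies. The term $\Delta y^\epsilon_0$ is deterministic, since $\mathcal F_0$ is trivial, and it tends to $0$ by \Cref{le3_1} at $t=0$. $\mathbb{E}|\Delta x^\epsilon_T|^2\to0$ is also given by \Cref{le3_1}.

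The main obstacle is type two, because the derivatives of $f,\Phi$ are only continuous and of linear growth, not bounded or Lipschitz. We handle it with convergence in measure plus uniform integrability. Since $x^\epsilon\to\bar x$, $y^\epsilon\to\bar y$, $z^\epsilon\to\bar z$ in $L^2(\mathrm{d}t\otimes\mathrm{d}P)$, the arguments converge in $dt\otimes dP$-measure. Continuity then gives $\tilde f^{\epsilon,\theta}\to f^\tau$ in measure, uniformly over $\theta\in[0,1]$ since the arguments are convex combinations. The family $|\tilde f^{\epsilon,\theta}-f^\tau|^2$ is dominated by $C(1+|\bar x|^2+|x^\epsilon|^2+\dots)$, and the right-hand side converges in $L^1$; so by a generalized (Vitali) dominated convergence argument, $\tilde f^{\epsilon,\theta}-f^\tau\to0$ in $L^2$. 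Cauchy--Schwarz against $(x^1,y^1,z^1,\delta\bar u)\in L^2$ then kills the term. The same reasoning applied to $\Phi$ at $(x_T,y_0)$ works because $x^\epsilon_T\to\bar x_T$ in $L^2$ and $y^\epsilon_0\to\bar y_0$. Combining these shows the limit exists and equals the stated expression.

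Finally, the limit is $\geq0$ because $\bar u$ is optimal and $u^\epsilon\in\mathcal U_{ad}$. This gives $J(u^\epsilon)-J(\bar u)\ge0$ for every $\epsilon\in(0,1]$, so each difference quotient is nonnegative, and hence so is their limit.
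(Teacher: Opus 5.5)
Your proposal is correct and follows essentially the same route as the paper: your type-one and type-two errors are exactly the paper's $\Psi^{\epsilon}$ and $\Xi^{\epsilon}$, both are shown to vanish via \Cref{le3_1} and a dominated-convergence argument, and nonnegativity comes from optimality of $\bar u_{\cdot}$. You differ only in two places. You treat the running cost explicitly, where the paper cites Peng's Lemma 4.2. Your convergence-in-measure plus Vitali (Pratt) step is a rigorous version of the paper's bare appeal to dominated convergence, which is genuinely needed because the natural dominating function $C(1+|\bar x|^2+|x^{\epsilon}|^2+\cdots)$ depends on $\epsilon$.
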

\begin{proof}
  Recalling that the cost functional is defined by \eqref{eq007}, we have
  \begin{equation*}
    \begin{aligned}
      &\quad \text{ }J(u^{\epsilon}_{\cdot})-J(\bar u_{\cdot})\\
      &= \mathbb{E}\bigg[\int^T_0\big(f(t,x^{\epsilon}_t,y^{\epsilon}_t,z^{\epsilon}_t,u^{\epsilon}_t)-f(t,\bar x_t,\bar y_t,\bar z_t,\bar u_t)\big)\mathrm{d}t\\
      &\qquad+\Phi(x^{\epsilon}_T,y^{\epsilon}_0)-\Phi(\bar x_T,\bar y_0)\bigg].
    \end{aligned}
  \end{equation*}
  Following the proof of Lemma 4.2 in \cite{P1993}, it is easy to see that
  \begin{equation*}
    \begin{aligned}
      &\quad\text{ } \underset{\epsilon \to 0}{\mathop{\lim }}\,\frac{1}{\epsilon}\mathbb{E}\int^T_0\big(f(t,x^{\epsilon}_t,y^{\epsilon}_t,z^{\epsilon}_t,u^{\epsilon}_t)-f(t,\bar x_t,\bar y_t,\bar z_t,\bar u_t)\big)\mathrm{d}t \\
      &= \mathbb{E}\int^T_0\big((f^x_t)^{\top}x^1_t+(f^y_t)^{\top}y^1_t+(f^z_t)^{\top}z^1_t+(f^u_t)^{\top}\delta \bar u_t\big)\mathrm{d}t
    \end{aligned}
  \end{equation*}
  Therefore, it suffices to focus on the coupling term of the initial-terminal values $\Phi(x^{\epsilon}_T, y^{\epsilon}_0)-\Phi(\bar{x}_T, \bar{y}_0)$. We can obtain
  \begin{equation*}
    \begin{aligned}
      &\quad\text{ }\frac{1}{\epsilon}\mathbb{E}\big[\Phi(x^{\epsilon}_T,y^{\epsilon}_0)-\Phi(\bar x_T,\bar y_0)\big]\\
      &= \mathbb{E}\big[(\Phi^{x}_T)^{\top}x^1_T+(\Phi^{y}_0)^{\top}y^1_0\big]+\mathbb{E}\big[\Psi^{\epsilon}+\Xi^{\epsilon}\big],
    \end{aligned}
  \end{equation*}
  where 
  \begin{equation*}
    \left\{
      \begin{aligned}
        &\Phi^{\epsilon,x}_{T} = \int^1_0\Phi_x\big(\bar x_T+\lambda(x^{\epsilon}_T-\bar x_T),\bar y_0+\lambda(y^{\epsilon}_0-\bar y_0)\big)\mathrm{d}\lambda,\\
        &\Phi^{\epsilon,y}_{0} = \int^1_0\Phi_y\big(\bar x_T+\lambda(x^{\epsilon}_T-\bar x_T),\bar y_0+\lambda(y^{\epsilon}_0-\bar y_0)\big)\mathrm{d}\lambda,\\
        &\Psi^{\epsilon} = (\Phi^{\epsilon,x}_{T})^{\top}\Delta x^{\epsilon}_T+(\Phi^{\epsilon,y}_{0})^{\top}\Delta y^{\epsilon}_0,\\
        &\Xi^{\epsilon} = (\Phi^{\epsilon,x}_{T}-\Phi^{x}_T)^{\top}x^1_T+(\Phi^{\epsilon,y}_{0}-\Phi^{y}_0)^{\top}y^1_0.
      \end{aligned}
    \right.
  \end{equation*}
  By virtue of \Cref{le3_1} and dominated convergence theorem, $\underset{\epsilon \to 0}{\mathop{\lim }}\,\mathbb{E}|\Psi^{\epsilon}| = 0$ and $\underset{\epsilon \to 0}{\mathop{\lim }}\,\mathbb{E}|\Xi^{\epsilon}| = 0$ hold. Therefore, we can derive
  \begin{equation*}
    \underset{\epsilon \to 0}{\mathop{\lim }}\,\frac{1}{\epsilon}\mathbb{E}\big[\Phi(x^{\epsilon}_T,y^{\epsilon}_0)-\Phi(\bar x_T,\bar y_0)\big] = \mathbb{E}\big[(\Phi^{x}_T)^{\top}x^1_T+(\Phi^{y}_0)^{\top}y^1_0\big].
  \end{equation*}
  The proof is complete. 
\end{proof}
\subsection{Stochastic maximum principle}
\label{subsec3_2}
In this part, we derive the SMP for the optimal control problem with coupled initial-terminal values. To this end, we introduce the following adjoint equation:
\begin{equation}
  \label{eq009}
  \left\{
    \begin{aligned}
      &\mathrm{d}p_t = \big((g_t^y)^{\top}p_t-f_t^y\big)\mathrm{d}t+\big((g_t^z)^{\top}p_t-f_t^z\big)\mathrm{d}W_t,\\
      &\mathrm{d}q_t = -\big(\!-(g_t^x)^{\top}p_t\!+\!(b_t^x)^{\top}q_t\!+\!(\sigma_t^x)^{\top}\varpi_t\!+\!f^x_t\big)\mathrm{d}t\!+\!\varpi_t\mathrm{d}W_t,\\
      &p_0 = -\mathbb{E}[\Phi^y_0],\quad q_T = -(h^x_T)^{\top}p_T+\Phi^x_T.
    \end{aligned}
  \right.
\end{equation}
\begin{remark}
  \label{re3_1}
  It is worth noting that $p_0$ here is not equal to $-\Phi^y_0$. This is because the cost functional \eqref{eq007} contains a term $\Phi(x_T, y_0)$ that couples the terminal value $x_T$ of the forward process and the initial value $y_0$ of the backward process. Consequently, $\Phi^y_0 = \Phi_{y}(\bar x_T, \bar y_0)$ is not $\mathcal{F}_0$-measurable, which leads to $p_0 = -\mathbb{E}[\Phi^y_0]$ in \eqref{eq009}. If $x_T$ and $y_0$ are decoupled, $p_0$ reduces to an expression without the mathematical expectation (see, for example, (4.11) in \cite{P1993} and (3.7) in \cite{W1998}).
\end{remark}

Under (H1)-(H2), \eqref{eq009} admits a unique solution $(p_{\cdot},q_{\cdot},\varpi_{\cdot})$ $\in S^2_{\mathbb{F}}(0,T;\mathbb{R}^{m})\times S^2_{\mathbb{F}}(0,T;\mathbb{R}^{n})\times L^2_{\mathbb{F}}(0,T;\mathbb{R}^{n\times d})$. To proceed, we define the following Hamiltonian function $H:\Omega \times [0, T] \times \mathbb{R}^n \times \mathbb{R}^m \times \mathbb{R}^{m\times d}  \times  \mathbb{R}^m \times \mathbb{R}^n \times \mathbb{R}^{n\times d} \times U \rightarrow \mathbb{R}$,
\begin{equation}
  \label{eq010}
  \begin{aligned}
    &H(t,x,y,z,p,q,\varpi,u) = \langle q, b(t,x,u) \rangle + \langle \varpi, \sigma(t,x,u) \rangle \\
    &\qquad\qquad\qquad\qquad - \langle p, g(t,x,y,z,u) \rangle+f(t,x,y,z,u).
  \end{aligned}
\end{equation}
Then, we state the SMP for optimal control of our Problem (FBIT).
\begin{theorem}
  \label{th3_1}
  Let (H1)-(H2) hold. Suppose that $\bar{u}_{\cdot}\in {{\mathcal{U}}_{ad}}$ is an optimal control of Problem (FBIT) and $(\bar{x}_{\cdot}, \bar{y}_{\cdot}, \bar{z}_{\cdot})$ is the optimal trajectory of \eqref{eq006}. $(p_{\cdot},q_{\cdot},\varpi_{\cdot})$ is solution of \eqref{eq009} and $H$ satisfies \eqref{eq010}. Then for any $v\in U$, we have
  \begin{equation}
    \label{eq011}
    \langle H_u(t,\bar x_t,\bar y_t,\bar z_t, p_t, q_t, \varpi_t, \bar u_t), (v-\bar u_t) \rangle \geq 0, \text{ a.e., a.s.. }
  \end{equation}
\end{theorem}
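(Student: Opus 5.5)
We start from the variational inequality of \Cref{le3_2} and use the adjoint process \eqref{eq009} to express its left-hand side in terms of $\delta\bar u_\cdot$ alone, eliminating $(x^1_\cdot,y^1_\cdot,z^1_\cdot)$.

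The first step is to apply It\^o's formula to $\langle p_t,y^1_t\rangle+\langle q_t,x^1_t\rangle$ on $[0,T]$ and take expectations. The stochastic integrals are true martingales because all processes lie in the $S^2$/$L^2$ spaces given by (H1)--(H2), so they drop out. The boundary terms are as follows.
\begin{itemize}
\item At $t=0$ we have $x^1_0=0$ and $p_0=-\mathbb{E}[\Phi^y_0]$. Since $y^1_0$ is deterministic ($\mathcal F_0$ is trivial), $\mathbb{E}\langle p_0,y^1_0\rangle=-\mathbb{E}[(\Phi^y_0)^{\top}y^1_0]$. This is precisely why the expectation appears in $p_0$ (\Cref{re3_1}), and it is the step where the coupling of $x_T$ and $y_0$ is handled.
\item At $t=T$ we have $y^1_T=h^x_Tx^1_T$ and $q_T=-(h^x_T)^\top p_T+\Phi^x_T$. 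Hence $\langle p_T,y^1_T\rangle+\langle q_T,x^1_T\rangle=(\Phi^x_T)^\top x^1_T$.
\end{itemize}
Combining these, the left side of the It\^o identity is
\begin{equation*}
\mathbb{E}\big[(\Phi^x_T)^\top x^1_T+(\Phi^y_0)^\top y^1_0\big].
\end{equation*}

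The second step is to expand the $\mathrm{d}t$-terms. The drift of $\langle p,y^1\rangle$ contributes
\begin{equation*}
\langle (g^y)^\top p-f^y,y^1\rangle-\langle p,g^xx^1+g^yy^1+g^zz^1+g^u\delta\bar u\rangle+\mathrm{tr}\big[((g^z)^\top p-f^z)^\top z^1\big].
\end{equation*}
The drift of $\langle q,x^1\rangle$ contributes
\begin{equation*}
-\langle -(g^x)^\top p+(b^x)^\top q+(\sigma^x)^\top\varpi+f^x,x^1\rangle+\langle q,b^xx^1+b^u\delta\bar u\rangle+\langle\varpi,\sigma^xx^1+\sigma^u\delta\bar u\rangle.
\end{equation*}
The terms in $p$ and $q$ multiplying $x^1$, $y^1$ and $z^1$ cancel by construction of \eqref{eq009}. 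What remains is
\begin{equation*}
-\mathbb{E}\int_0^T\big[(f^x)^\top x^1+(f^y)^\top y^1+(f^z)^\top z^1\big]\mathrm{d}t+\mathbb{E}\int_0^T\langle (b^u)^\top q+(\sigma^u)^\top\varpi-(g^u)^\top p,\delta\bar u\rangle\mathrm{d}t.
\end{equation*}
The contraction with $z^1$ in the $\mathbb{R}^{m\times d}$ case uses the matrix inner product; these conventions need care but are routine. Substituting into \Cref{le3_2}, the $f^x,f^y,f^z$ terms cancel and we obtain
\begin{equation*}
\mathbb{E}\int_0^T\langle H_u(t,\bar x_t,\bar y_t,\bar z_t,p_t,q_t,\varpi_t,\bar u_t),v_t-\bar u_t\rangle\mathrm{d}t\ge0
\end{equation*}
for every $v_\cdot\in\mathcal U_{ad}$.

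The final step localizes this inequality. For $v\in U$ and $A\in\mathcal G$, take $v_t=v\mathbbm 1_A+\bar u_t\mathbbm 1_{A^c}$, which is admissible since $U$ is convex and the control is square integrable. This gives
\begin{equation*}
\mathbb{E}\int_0^T\mathbbm 1_A\langle H_u,v-\bar u_t\rangle\mathrm{d}t\ge0 \quad\text{for all } A\in\mathcal G.
\end{equation*}
Choosing $A=\{\langle H_u,v-\bar u_t\rangle<0\}$ yields \eqref{eq011} for fixed $v$, a.e., a.s. To get a single null set valid for all $v$, restrict first to a countable dense subset of $U$ and then pass to all of $U$ by continuity in $v$.

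The main obstacle is the careful bookkeeping in the It\^o computation, together with verifying that the time-zero term is correctly handled by the expectation in $p_0$. Integrability of the products follows from the growth bounds in (H2) and the square integrability of all processes.
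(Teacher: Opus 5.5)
Your proposal is correct and follows the paper's route. You apply It\^o's formula to $\langle p_t,y^1_t\rangle+\langle q_t,x^1_t\rangle$, use the fact that $y^1_0$ is deterministic to absorb the expectation in $p_0=-\mathbb{E}[\Phi^y_0]$, obtain the duality identity \eqref{eq012}, and combine it with \Cref{le3_2} to get $\mathbb{E}\int_0^T\langle H_u,\delta\bar u_t\rangle\mathrm{d}t\ge 0$. Your localization with $v\mathbbm{1}_A+\bar u\mathbbm{1}_{A^c}$, $A\in\mathcal G$, plus the countable-dense-subset argument, spells out the step the paper compresses into ``Thus \eqref{eq011} follows''; note that this control is admissible simply because both pieces are $U$-valued, so convexity of $U$ is not needed there.
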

\begin{proof}
  Applying It\^o's formula to $\langle p_t \cdot y^1_t \rangle+\langle q_t \cdot x^1_t \rangle$ and using Fubini's theorem, we have
\begin{equation}
  \label{eq012}
  \begin{aligned}
    &\quad\text{ }\mathbb{E}\bigg[\int^T_0\big((f^x_t)^{\top}x^1_t+(f^y_t)^{\top}y^1_t+(f^z_t)^{\top}z^1_t\big)\mathrm{d}t\\
    &\qquad\quad+(\Phi^{x}_T)^{\top}x^1_T+(\Phi^{y}_0)^{\top}y^1_0\bigg]\\
    &=\mathbb{E}\int^T_0\big(q_t^{\top}b_t^u\delta \bar u_t+\varpi_t^{\top}\sigma_t^u\delta \bar u_t-p_t^{\top}g_t^u\delta \bar u_t\big)\mathrm{d}t.
  \end{aligned}
\end{equation}
Combining \eqref{eq010} and \eqref{eq012}, an application of \Cref{le3_2} yields
\begin{equation*}
  \mathbb{E}\int^T_0\langle H_u(t,\bar x_t,\bar y_t,\bar z_t, p_t, q_t, \varpi_t, \bar u_t), \delta \bar u_t \rangle\mathrm{d}t\geq 0.
\end{equation*}
Thus \eqref{eq011} follows. The proof is complete. 
\end{proof}

Next, we can also prove that \eqref{eq011} is the sufficient condition for the optimal control under some conditions.

\begin{theorem}
  \label{th3_2}
  Let (H1)-(H2) hold.  Suppose that $\Phi, H$ are convex with respect to $(x,y,z,u)$ and $h$ is linear with respect to $x$. Let $\bar u_{\cdot} \in {{\mathcal{U}}_{ad}}$ be an admissible control. $(\bar x_{\cdot},\bar y_{\cdot},\bar z_{\cdot})$ and $(p,q,\varpi)$ are the corresponding solution of \eqref{eq006} and \eqref{eq009}, respectively. Then, if \eqref{eq011} is satisfied, $\bar u_{\cdot}$ is an optimal control of Problem (FBIT).
\end{theorem}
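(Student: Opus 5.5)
We take an arbitrary $v_{\cdot}\in\mathcal U_{ad}$ with state $(x_{\cdot},y_{\cdot},z_{\cdot})$ solving \eqref{eq006}, and show $J(v_{\cdot})-J(\bar u_{\cdot})\geq 0$ by a duality argument in which the difference of states plays the role of $(x^1,y^1,z^1)$ in the proof of \Cref{th3_1}. Write $\hat x=x-\bar x$, $\hat y=y-\bar y$, $\hat z=z-\bar z$ and $\hat u=v-\bar u$. Split the difference as
\begin{equation*}
  J(v_{\cdot})-J(\bar u_{\cdot})=\mathbb E\int_0^T\big(f(t,x_t,y_t,z_t,v_t)-\bar f_t\big)\mathrm dt+\mathbb E\big[\Phi(x_T,y_0)-\Phi(\bar x_T,\bar y_0)\big].
\end{equation*}
By convexity of $\Phi$, the last term is at least $\mathbb E[(\Phi^x_T)^{\top}\hat x_T+(\Phi^y_0)^{\top}\hat y_0]$. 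Since $y_0,\bar y_0$ are deterministic, this equals $\mathbb E[(\Phi^x_T)^{\top}\hat x_T]+(\mathbb E\Phi^y_0)^{\top}\hat y_0$. This is exactly the point where $p_0=-\mathbb E[\Phi^y_0]$ in \eqref{eq009} enters.

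Next, apply It\^o's formula to $\langle p_t,\hat y_t\rangle+\langle q_t,\hat x_t\rangle$ on $[0,T]$ and take expectations; the local martingale parts vanish by the standard square-integrability estimates. The boundary terms are handled as follows. Since $\hat x_0=0$, the $q$-term contributes nothing at time $0$. At time $0$ the $p$-term gives $\langle p_0,\hat y_0\rangle=-(\mathbb E\Phi^y_0)^{\top}\hat y_0$. At time $T$, because $h$ is linear in $x$ we have exactly $\hat y_T=h^x_T\hat x_T$. Then
\begin{equation*}
  \langle p_T,\hat y_T\rangle+\langle q_T,\hat x_T\rangle=\langle p_T,h^x_T\hat x_T\rangle+\langle -(h^x_T)^{\top}p_T+\Phi^x_T,\hat x_T\rangle=(\Phi^x_T)^{\top}\hat x_T.
\end{equation*}
Linearity of $h$ is essential here; otherwise only an inequality would hold, of uncontrolled sign. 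Hence
\begin{equation*}
  \mathbb E\big[(\Phi^x_T)^{\top}\hat x_T\big]+(\mathbb E\Phi^y_0)^{\top}\hat y_0=\mathbb E\int_0^T(\text{drift terms})\,\mathrm dt.
\end{equation*}
The drift terms involve $\langle q,b(v)-\bar b\rangle+\langle\varpi,\sigma(v)-\bar\sigma\rangle-\langle p,g(v)-\bar g\rangle$ together with the linear adjoint terms $-(g^x)^{\top}p+(b^x)^{\top}q+(\sigma^x)^{\top}\varpi+f^x$ against $\hat x$, $(g^y)^{\top}p-f^y$ against $\hat y$, and $(g^z)^{\top}p-f^z$ against $\hat z$.

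Adding the running cost and regrouping, we obtain
\begin{equation*}
  J(v_{\cdot})-J(\bar u_{\cdot})\geq\mathbb E\int_0^T\Big[H(t,x_t,y_t,z_t,p_t,q_t,\varpi_t,v_t)-H(t,\bar x_t,\bar y_t,\bar z_t,p_t,q_t,\varpi_t,\bar u_t)-\langle H_x,\hat x_t\rangle-\langle H_y,\hat y_t\rangle-\langle H_z,\hat z_t\rangle\Big]\mathrm dt,
\end{equation*}
where $H_x,H_y,H_z$ are evaluated along the optimal quadruple. Indeed, from \eqref{eq010} we have $H_x=(b^x)^{\top}q+(\sigma^x)^{\top}\varpi-(g^x)^{\top}p+f^x$, $H_y=-(g^y)^{\top}p+f^y$ and $H_z=-(g^z)^{\top}p+f^z$, which match the adjoint drifts in \eqref{eq009}.

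Convexity of $H$ in $(x,y,z,u)$ bounds the integrand below by $\langle H_u(t,\bar x_t,\bar y_t,\bar z_t,p_t,q_t,\varpi_t,\bar u_t),\hat u_t\rangle$, which is nonnegative by \eqref{eq011}. Hence $J(v_{\cdot})\geq J(\bar u_{\cdot})$.

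The main obstacle is the careful sign bookkeeping in the It\^o computation, and in particular the cross term from the diffusions. The $dW$ coefficient of $p$ is $(g^z)^{\top}p-f^z$, and the $dW$ coefficient of $\hat y$ is $\hat z$. Their product gives $\langle (g^z)^{\top}p-f^z,\hat z\rangle$, which must combine with the $-\langle p,g(\cdot,z,\cdot)\rangle$ part of $H$ into $-\langle H_z,\hat z\rangle$. Similarly, $\langle\varpi,\sigma(v)-\bar\sigma\rangle$ arises from the cross variation of $q$ and $\hat x$. I would verify these terms component by component, as in \eqref{eq012}. A secondary technical point is justifying that the stochastic integrals are true martingales, which follows from (H1)-(H2) and the $S^2\times L^2$ regularity of all the processes.
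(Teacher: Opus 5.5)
Your proposal is correct and follows essentially the same route as the paper: It\^o's formula applied to $\langle p_t,\Delta y_t\rangle+\langle q_t,\Delta x_t\rangle$, convexity of $\Phi$ for the initial-terminal term, linearity of $h$ to remove the term $p_T^{\top}(\Delta h_T-h^x_T\Delta x_T)$, and finally convexity of $H$ combined with \eqref{eq011}. The differences are cosmetic: you take the differences in the opposite order, and you state explicitly that $\hat y_0$ is deterministic, which is exactly what makes $p_0=-\mathbb{E}[\Phi^y_0]$ the right initial condition.
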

\begin{proof}
  Let $\hat u_{\cdot}$ be an admissible control and $(\hat x_{\cdot},\hat y_{\cdot},\hat z_{\cdot})$ be the corresponding solution. For simplicity, we denote $H_{\cdot}^{\tau}={{H}_{\tau}}(\cdot,{{\bar{x}}_{\cdot}},{{\bar{y}}_{\cdot}},{{\bar{z}}_{\cdot}},{{p}_{\cdot}},{{q}_{\cdot}},{{\varpi}_{\cdot}},{{\bar{u}}_{\cdot}})$, $\Delta {{H}_{\cdot}}=H(\cdot,{{\bar{x}}_{\cdot}},{{\bar{y}}_{\cdot}},{{\bar{z}}_{\cdot}},{{p}_{\cdot}},{{q}_{\cdot}},{{\varpi}_{\cdot}},$ ${{\bar{u}}_{\cdot}})-H(\cdot,{{\hat{x}}_{\cdot}},{{\hat{y}}_{\cdot}},{{\hat{z}}_{\cdot}},{{p}_{\cdot}},{{q}_{\cdot}},{{\varpi}_{\cdot}},{{\hat{u}}_{\cdot}})$, $\Delta {{f}_{\cdot}} = f(\cdot,{{\bar{x}}_{\cdot}},{{\bar{y}}_{\cdot}},{{\bar{z}}_{\cdot}},{{\bar{u}}_{\cdot}})-f(\cdot,{{\hat{x}}_{\cdot}},{{\hat{y}}_{\cdot}},{{\hat{z}}_{\cdot}},{{\hat{u}}_{\cdot}})$, $\Delta {{h}_{T}} = h({{\bar{x}}_T})-h({{\hat{x}}_T})$, $\Delta \tau =\bar\tau-\hat \tau$ with $\tau=x,y,z,u$. Then, applying It\^o's formula to $\langle{{p}_{t}},\Delta y_t\rangle+\langle{{q}_{t}},\Delta x_t\rangle$, we have
\begin{equation*}
  \begin{aligned}
    &\quad\text{ }\mathbb{E}\Big[(\Phi^x_T)^{\top}\Delta x_T+(\Phi^y_0)^{\top}\Delta y_0+p^{\top}_T\big(\Delta h_T-h^x_T\Delta x_T\big)\Big]\\
    &=\mathbb{E}\int^T_0\big(\Delta H_t - \langle H^x_t, \Delta x_t \rangle - \langle H^y_t, \Delta y_t \rangle \\
    &\qquad\qquad\quad- \langle H^z_t, \Delta z_t \rangle-\Delta f_t \big)\mathrm{d}t.
  \end{aligned}
\end{equation*}
Note that $\Phi, H$ are convex and $h$ is linear. As a consequence,
\begin{equation*}
  \begin{aligned}
    &\quad\text{ }J(\bar u_{\cdot})-J(\hat u_{\cdot})\\
    &\leq \mathbb{E}\int^T_0\big(\Delta H_t - \langle H^x_t, \Delta x_t \rangle - \langle H^y_t, \Delta y_t \rangle \\
    &\qquad\qquad\quad- \langle H^z_t, \Delta z_t \rangle-\langle H^u_t, \Delta u_t \rangle\big)\mathrm{d}t\\
    &\leq 0.
  \end{aligned}
\end{equation*}
The proof is complete.
\end{proof}

\section{Efficient strategies} \label{sec3}
In this section, we apply the results established in \Cref{sec2} to the mean-variance portfolio selection problem proposed in \Cref{sec1} (i.e., to solve \eqref{eq005}). Before that, we consider a more general stochastic LQ control problem. Its cost functional not only involves a coupling term between the terminal and initial values but includes a running cost. Furthermore, the aforementioned portfolio selection problem, is obtained as a special case of this generalized formulation.

\subsection{A general stochastic LQ problem}
\label{subsec4_1}
Consider the following SDE system:
\begin{equation}
  \label{eq013}
  \left\{
    \begin{aligned}
      &\mathrm{d}x_t = (A_tx_t+B_tu_t)\mathrm{d}t+u^{\top}_tD_t\mathrm{d}W_t,\\
      &x_0=a,
    \end{aligned}
  \right.
\end{equation}
where $a\in \mathbb{R}$, $A:[0,T]\to \mathbb{R}$, $B:[0,T]\to \mathbb{R}^{1\times d}$, $D:[0,T]\to \mathbb{R}^{d\times d}$, and $A_{\cdot}, B_{\cdot}, D_{\cdot}$ are uniformly bounded measurable functions. Define the admissible control set $\mathcal U_{ad}:=\{u| u_{\cdot} \in L^2_{\mathbb{F}}(0,T;\mathbb{R}^d)\}$. Now, we introduce the cost functional:
\begin{equation}
  \label{eq014}
  \begin{aligned}
    &J(u_{\cdot}) = \mathbb{E}\bigg[\int^T_0\big(L_tx^2_t+\langle R_tu_t, u_t \rangle\big)\mathrm{d}t\\
    &\qquad\qquad\quad+\big(Qx_T+S\mathbb{E}x_T\big)^2+Vx_T\bigg],
  \end{aligned}
\end{equation}
where $Q, S, V\in \mathbb{R}$, $L:[0,T]\to \mathbb{R}$, $R:[0,T]\to \mathbb{S}^{d}$, and $L_{\cdot}, R_{\cdot}$ are uniformly bounded measurable functions, and $R_{\cdot}+Q^2D_{\cdot}D_{\cdot}^{\top}\gg 0$.
\begin{remark}
  \label{re4_1}
  It is worth noting that the cost functional \eqref{eq014} not only involves a coupling term between the terminal and initial values but also includes a running cost. Furthermore, we relax the uniformly positive definite condition typically required for the running cost $R_{\cdot}$ in classical LQ problems. Consequently, the conditions can be satisfied even when $R_{\cdot}$ is indefinite, which facilitates the investigation of the portfolio selection problem in the following subsections.
\end{remark}

Applying It\^o's formula to $x^2_t$, one can show that
\begin{equation}
  \label{eq015}
  \mathbb{E}\big[x^2_T\big] = a^2+ \mathbb{E}\int^T_0\big(2A_tx_t^2+2x_tB_tu_t+\langle u_t^{\top}D_t, u_t^{\top}D_t \rangle\big)\mathrm{d}t.
\end{equation}
Combining \eqref{eq014} and \eqref{eq015}, we have
\begin{equation}
  \label{eq016}
  \begin{aligned}
    &J(u_{\cdot}) = \mathbb{E}\bigg[\int^T_0\big(\tilde L_tx^2_t+2Q^2x_tB_tu_t+\langle \tilde R_tu_t, u_t \rangle\big)\mathrm{d}t\\
    &\qquad\qquad\quad+2QSx_T\mathbb{E}x_T+S^2\big(\mathbb{E}x_T\big)^2+Vx_T+Q^2a^2\bigg],
  \end{aligned}
\end{equation}
where $\tilde L_{\cdot} = L_{\cdot} + 2Q^2A_{\cdot}$ and $\tilde R_{\cdot} = R_{\cdot}+Q^2D_{\cdot}D_{\cdot}^{\top}$. By adopting the same transformation technique as in \eqref{eq005}, we introduce the stochastic process $y_t := \mathbb{E}[x_T | \mathcal{F}_t]$ for $0 \leq t \leq T$. Consequently, by incorporating \eqref{eq013} and \eqref{eq016}, we can rewrite the state equation and the cost functional, respectively, as follows:
\begin{equation*}
  \left\{
    \begin{aligned}
      &\mathrm{d}x_t = (A_tx_t+B_tu_t)\mathrm{d}t+u^{\top}_tD_t\mathrm{d}W_t,\\
      &\mathrm{d}y_t = z_t\mathrm{d}W_t,\\
      &x_0=a,\quad y_T=x_T,
    \end{aligned}
  \right.
\end{equation*}
and
\begin{equation*}
  \begin{aligned}
    &J(u_{\cdot}) = \mathbb{E}\bigg[\int^T_0\big(\tilde L_tx^2_t+2Q^2x_tB_tu_t+\langle \tilde R_tu_t, u_t \rangle\big)\mathrm{d}t\\
    &\qquad\qquad\quad+2QSx_Ty_0+S^2y_0^2+Vx_T+Q^2a^2\bigg].
  \end{aligned}
\end{equation*}
By virtue of \Cref{th3_1} and \Cref{th3_2}, it follows that the optimal control $\bar{u}_{\cdot}$ can be obtained from \eqref{eq011}. Let $(\bar{x}_{\cdot}, \bar{y}_{\cdot}, \bar{z}_{\cdot})$ be the state trajectories associated with $\bar{u}_{\cdot}$. Then, we can derive
\begin{equation}
  \label{eq017}
  B^{\top}_tq_t+D_t\varpi^{\top}_t+2Q^2B^{\top}_t\bar x_t+2\tilde{R}_t\bar u_t = 0, \text{ a.e., a.s.}, 
\end{equation}
where
\begin{equation}
  \label{eq018}
  \left\{
    \begin{aligned}
      &\mathrm{d}q_t = -\big(A_tq_t+2\tilde L_t\bar x_t+2Q^2B_t\bar u_t\big)\mathrm{d}t+\varpi_t \mathrm{d}W_t,\\
      &q_T = 2S(2Q+S)\bar y_0+V.
    \end{aligned}
  \right.
\end{equation}
Next, in order to obtain the feedback representation of the optimal control $\bar{u}_{\cdot}$, we introduce the following equations:
\begin{equation}
  \label{eq019}
  \left\{
    \begin{aligned}
      &\dot \Pi_t +2A_t\Pi_t - (\Pi_t+2Q)(\Pi_t+2Q^2)B_t\Lambda_tB_t^{\top}+2\tilde L_t = 0,\\
      &\Pi_T = 0,
    \end{aligned}
  \right.
\end{equation}
\begin{equation}
  \label{eq020}
  \left\{
    \begin{aligned}
      &\dot \Sigma_t \!+\!2\big[A_t-(\Pi_t+Q^2+Q)B_t\Lambda_tB_t^{\top}\big]\Sigma_t - B_t\Lambda_tB_t^{\top}\Sigma^2_t = 0,\\
      &\Sigma_T = 2S(2Q+S),
    \end{aligned}
  \right.
\end{equation}
and
\begin{equation}
  \label{eq021}
  \left\{
    \begin{aligned}
      &\dot \rho_t +\big[A_t-(\Pi_t+\Sigma_t+2Q)B_t\Lambda_tB_t^{\top}\big]\rho_t= 0,\\
      &\rho_T = V,
    \end{aligned}
  \right.
\end{equation}
where $\Gamma_{\cdot} = 1 + \frac{1}{2}\Pi_{\cdot}D^{\top}_{\cdot}\tilde R_{\cdot}^{-1}D_{\cdot}$ and $\Lambda_{\cdot}=\frac{1}{2}\tilde R_{\cdot}^{-1}-\frac{1}{4}\Pi_{\cdot}\tilde R_{\cdot}^{-1}D_{\cdot}\Gamma_{\cdot}^{-1}D^{\top}_{\cdot}\tilde R_{\cdot}^{-1}$. Now, we derive the feedback representation of the optimal control $\bar{u}_{\cdot}$.
\begin{proposition}
  \label{pr4_1}
  If \eqref{eq019} and \eqref{eq020} admit solutions $\Pi_{\cdot} \in C([0,T]; \mathbb{R})$ and $\Sigma_{\cdot} \in C([0,T]; \mathbb{R})$, respectively, then the optimal control $\bar u_{\cdot}$ has the following feedback representation:
  \begin{equation}
    \label{eq022}
    \bar u_t = -\Big[\big(\Pi_t+2Q^2\big)\bar x_t + \Sigma_t \mathbb{E}\bar x_t+\rho_t\Big]\Lambda_tB^{\top}_t, \text{ a.e., a.s.}.
  \end{equation}
\end{proposition}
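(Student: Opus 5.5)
The plan is a decoupling ansatz for the adjoint pair $(q_\cdot,\varpi_\cdot)$ in \eqref{eq018}, substituted into the stationarity condition \eqref{eq017}. Since $q_T=2S(2Q+S)\bar y_0+V$ and $\bar y_0=\mathbb{E}\bar x_T$, I would guess
\begin{equation*}
  q_t=\Pi_t\bar x_t+\Sigma_t\mathbb{E}\bar x_t+\rho_t ,
\end{equation*}
with deterministic $\Pi_\cdot,\Sigma_\cdot,\rho_\cdot$ satisfying $\Pi_T=0$, $\Sigma_T=2S(2Q+S)$ and $\rho_T=V$. These are exactly the terminal conditions in \eqref{eq019}--\eqref{eq021}. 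Given $\Pi_\cdot,\Sigma_\cdot\in C([0,T];\mathbb{R})$, the linear ODE \eqref{eq021} has a unique continuous solution $\rho_\cdot$.

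\emph{Diffusion part and the control.} By It\^o's formula, and because $\mathbb{E}\bar x_t$ is deterministic, the only diffusion term of $q_t$ comes from $\Pi_t\,\mathrm{d}\bar x_t$. Comparing with \eqref{eq018} gives $\varpi_t=\Pi_t\bar u_t^{\top}D_t$, so $D_t\varpi_t^{\top}=\Pi_tD_tD_t^{\top}\bar u_t$. Substituting into \eqref{eq017} yields
\begin{equation*}
  \big(2\tilde R_t+\Pi_tD_tD_t^{\top}\big)\bar u_t=-B_t^{\top}\big(q_t+2Q^2\bar x_t\big).
\end{equation*}
By the Sherman--Morrison--Woodbury identity, $(2\tilde R_t+\Pi_tD_tD_t^{\top})^{-1}=\frac12\tilde R_t^{-1}-\frac14\Pi_t\tilde R_t^{-1}D_t\Gamma_t^{-1}D_t^{\top}\tilde R_t^{-1}=\Lambda_t$, provided $\Gamma_t$ is invertible. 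Since $\Lambda_t$ is symmetric and the bracket is scalar, this gives exactly \eqref{eq022}. Taking expectations also gives $\mathbb{E}\bar u_t=-[(\Pi_t+2Q^2+\Sigma_t)\mathbb{E}\bar x_t+\rho_t]\Lambda_tB_t^{\top}$.

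\emph{Drift matching.} Next I would compute the drift of $q_t$ as
\begin{equation*}
  \dot\Pi_t\bar x_t+\Pi_t(A_t\bar x_t+B_t\bar u_t)+\dot\Sigma_t\mathbb{E}\bar x_t+\Sigma_t(A_t\mathbb{E}\bar x_t+B_t\mathbb{E}\bar u_t)+\dot\rho_t ,
\end{equation*}
and set it equal to $-(A_tq_t+2\tilde L_t\bar x_t+2Q^2B_t\bar u_t)$. Inserting the feedback forms of $\bar u_t$ and $\mathbb{E}\bar u_t$, I would separate the coefficients of $\bar x_t$, of $\mathbb{E}\bar x_t$, and the constant terms. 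The $\bar x_t$ coefficient should reproduce \eqref{eq019}, via $(\Pi_t+2Q^2)\Pi_t+2Q^2(\Pi_t+2Q^2)$ combining with the $\Pi_t$-terms. The $\mathbb{E}\bar x_t$ coefficient should give \eqref{eq020}, and the constant term \eqref{eq021}. This bookkeeping is the main obstacle: the scalar factors multiplying $B_t\Lambda_tB_t^{\top}$ have to collapse precisely to $(\Pi_t+2Q)(\Pi_t+2Q^2)$, $2(\Pi_t+Q^2+Q)$ and $\Pi_t+\Sigma_t+2Q$. I would expand everything in powers of $B_t\Lambda_tB_t^{\top}$ and check each coefficient separately. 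If a mismatch appears, I would first recheck the sign convention for $q_\cdot$ inherited from \eqref{eq009}.

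\emph{Reversing the argument.} Finally the derivation has to be turned around. Define $\bar u_\cdot$ by \eqref{eq022}. The resulting linear mean-field SDE for $\bar x_\cdot$ (solve first for $\mathbb{E}\bar x_\cdot$, then for $\bar x_\cdot$) has a unique solution, and $\bar u_\cdot\in\mathcal U_{ad}$ because the coefficients are bounded. The processes $q_t:=\Pi_t\bar x_t+\Sigma_t\mathbb{E}\bar x_t+\rho_t$ and $\varpi_t:=\Pi_t\bar u_t^{\top}D_t$ then satisfy \eqref{eq018} by the computation above, together with \eqref{eq017}, i.e.\ \eqref{eq011} with equality. Optimality would then follow from \Cref{th3_2}, taking $h(x)=x$ as linear and using convexity of the quadratic data. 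I would argue this convexity through $\tilde R_\cdot\gg0$, or more safely by a direct completion-of-squares argument on $J$, and I expect this convexity justification to be the second delicate point.
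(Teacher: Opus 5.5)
\textbf{Same approach as the paper, but one step fails and one is unavailable.} Your route matches the paper's: the ansatz $q_t=\Pi_t\bar x_t+\Sigma_t\mathbb{E}\bar x_t+\rho_t$, then $\varpi_t=\Pi_t\bar u_t^{\top}D_t$ (this is the paper's \eqref{eq023}, since $\Lambda_tD_t=\frac12\tilde R_t^{-1}D_t\Gamma_t^{-1}$), then the Woodbury inversion giving $\Lambda_t$. Those steps are correct.

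\textbf{The drift matching does not give \eqref{eq019}--\eqref{eq021} as printed.} Your own intermediate expression $(\Pi_t+2Q^2)\Pi_t+2Q^2(\Pi_t+2Q^2)$ is already $(\Pi_t+2Q^2)^2$. With $\beta_t:=B_t\Lambda_tB_t^{\top}$, the full computation gives
\begin{equation*}
  \begin{aligned}
    &\dot\Pi_t+2A_t\Pi_t-(\Pi_t+2Q^2)^2\beta_t+2\tilde L_t=0,\\
    &\dot\Sigma_t+2\big[A_t-(\Pi_t+2Q^2)\beta_t\big]\Sigma_t-\beta_t\Sigma_t^2=0,\\
    &\dot\rho_t+\big[A_t-(\Pi_t+\Sigma_t+2Q^2)\beta_t\big]\rho_t=0.
  \end{aligned}
\end{equation*}
These coincide with the printed equations only when $Q^2=Q$. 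No sign convention can fix this. The factor $\Pi_t+2Q^2$ in front of $B_t\bar u_t$ comes from $\Pi_t\,\mathrm{d}\bar x_t$ plus the term $2Q^2B_t\bar u_t$ in \eqref{eq018}, and the same factor appears in the feedback.

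As a check, take $d=1$ and $A=L=S=V=0$. The classical equation $\dot P=P^2B^2/(R+PD^2)$, $P_T=Q^2$, becomes $\dot\Pi=(\Pi+2Q^2)^2B\Lambda B^{\top}$ under $\Pi=2P-2Q^2$. The printed \eqref{eq019} instead gives $(\Pi+2Q)(\Pi+2Q^2)$, hence a different gain, for example when $Q=2$.

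The paper's one-line proof passes over exactly this computation. What you can actually prove is the proposition with $2Q$ replaced by $2Q^2$ in \eqref{eq019}--\eqref{eq021}. This leaves the mean--variance case $Q=1$ untouched, and you should state the correction explicitly rather than expect the factors to collapse.

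\textbf{The reversal through \Cref{th3_2} is not available.} After the transformation, $\Phi(x,y)=2QSxy+S^2y^2+Vx+Q^2a^2$ has Hessian determinant $-4Q^2S^2<0$. So it is not convex whenever $QS\neq0$, which includes mean--variance. The $(x,u)$-part of $H$ is convex only if $\tilde L_t\geq Q^4B_t\tilde R_t^{-1}B_t^{\top}$, and $\tilde R_{\cdot}\gg0$ does not give this. A completion of squares would likewise need a positivity condition on $2\tilde R_t+\Pi_tD_tD_t^{\top}$ that is not assumed.

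\textbf{You do not need sufficiency.} The statement takes $\bar u_{\cdot}$ optimal, so \Cref{th3_1} gives \eqref{eq017}--\eqref{eq018}. What is missing, both in your forward pass and in the paper, is a proof that the ansatz actually holds for this $\bar u_{\cdot}$. Put $\delta q_t:=q_t-\Pi_t\bar x_t-\Sigma_t\mathbb{E}\bar x_t-\rho_t$ and $\delta\varpi_t:=\varpi_t-\Pi_t\bar u_t^{\top}D_t$. Then:
\begin{itemize}
  \item $\delta q_T=0$, because $\bar y_0=\mathbb{E}\bar x_T$.
  \item \eqref{eq017} gives $\bar u_t=-[(\Pi_t+2Q^2)\bar x_t+\Sigma_t\mathbb{E}\bar x_t+\rho_t]\Lambda_tB_t^{\top}-\Lambda_t(B_t^{\top}\delta q_t+D_t\delta\varpi_t^{\top})$.
  \item The corrected Riccati equations cancel every term proportional to $\bar x_t$ or to $\mathbb{E}\bar x_t$, and every constant term.
\end{itemize}
What remains is the linear mean-field BSDE
\begin{equation*}
  \begin{aligned}
    \mathrm{d}\delta q_t&=\Big[-A_t\delta q_t+(\Pi_t+2Q^2)B_t\Lambda_t\big(B_t^{\top}\delta q_t+D_t\delta\varpi_t^{\top}\big)\\
    &\qquad+\Sigma_tB_t\Lambda_t\big(B_t^{\top}\mathbb{E}\delta q_t+D_t\mathbb{E}\delta\varpi_t^{\top}\big)\Big]\mathrm{d}t+\delta\varpi_t\mathrm{d}W_t.
  \end{aligned}
\end{equation*}
Its only solution with $\delta q_T=0$ is zero, assuming $\Lambda_{\cdot}$ is bounded (which you also need for $\bar u_{\cdot}\in\mathcal U_{ad}$). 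This yields \eqref{eq022} with no convexity at all.
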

\begin{proof}
  By combining \eqref{eq013} and \eqref{eq018}-\eqref{eq021}, and applying Itô's formula to $\Pi_t\bar x_t + \Sigma_t\mathbb{E}\bar x_t + \rho_t$, we can obtain
\begin{equation}
  \label{eq023}
  \left\{
    \begin{aligned}
      &q_t = \Pi_t\bar x_t + \Sigma_t\mathbb{E}\bar x_t + \rho_t,\\
      &\varpi_t = -\frac{1}{2}\Pi_t\Big[\big(\Pi_t+2Q^2\big)\bar x_t + \Sigma_t \mathbb{E}\bar x_t+\rho_t\Big]B_t\tilde R_t^{-1}D_t\Gamma^{-1}_t.
    \end{aligned}
  \right.
\end{equation}
Substituting \eqref{eq023} into \eqref{eq017}, \eqref{eq022} follows. The proof is complete.
\end{proof}
\begin{remark}
  \label{re4_2}
  If \eqref{eq019} admits a solution $\Pi_{\cdot}$, the explicit expressions for $\Sigma_{\cdot}$ and $\rho_{\cdot}$ can be readily derived since \eqref{eq020} and \eqref{eq021} are a Bernoulli equation and a linear ordinary differential equation, respectively. However, the presence of the term $(\Pi_t+2Q)(\Pi_t+2Q^2)B_t\Lambda_tB_t^{\top}$ in \eqref{eq019} leads to highly complex properties of its solution. Nevertheless, if the stochastic LQ problem \eqref{eq013}-\eqref{eq014} in this subsection degenerates into the mean-variance portfolio problem \eqref{eq003}-\eqref{eq004} in \Cref{sec1}, \eqref{eq019} reduces to a linear ordinary differential equation (see \eqref{eq026}), from which the explicit expression for $\Pi_{\cdot}$ can be directly obtained.
\end{remark}
\subsection{Mean-variance portfolio selection problem}
\label{subsec4_2}
We first cast the mean-variance portfolio selection problem from \Cref{sec1} into the framework of the stochastic LQ problem discussed in \Cref{subsec4_1}. By comparing \eqref{eq003}-\eqref{eq004} with \eqref{eq013}-\eqref{eq014}, one can show that
\begin{equation*}
  \left\{
    \begin{aligned}
      &A_{\cdot} \!=\! r_{\cdot},\text{ } B_{\cdot} \!=\! (b^1_{\cdot}-r_{\cdot},\cdots,b^d_{\cdot}-r_{\cdot}),D_{\cdot} \!= \left(\begin{matrix} \sigma^{11}_{\cdot}  & \cdots & \sigma^{1d}_{\cdot}\\\vdots  & \ddots &\vdots\\ \sigma^{d1}_{\cdot} & \cdots & \sigma^{dd}_{\cdot}\\ \end{matrix}\right),\\
      &L_{\cdot} = 0,\quad R_{\cdot} = {\bf 0}, \quad Q = 1, \quad S=-1, \quad V=-\lambda .
    \end{aligned}
  \right.
\end{equation*}
For simplicity, we denote $\tilde R_{\cdot}:=D_{\cdot}D^{\top}_{\cdot}$, $\varphi_{\cdot}:=B_{\cdot}(D^{\top}_{\cdot})^{-1}$ and $\theta_{\cdot}:= B_{\cdot}\tilde R^{-1}_{\cdot}B^{\top}_{\cdot}$. Based on \eqref{eq013} and \eqref{eq022}, we can further simplify the expressions to obtain:
\begin{equation}
  \label{eq024}
  \left\{
    \begin{aligned}
      &\mathrm{d}\bar x_t \!=\!\Big\{(A_t\!-\!\theta_t)\bar x_t\!-\!\frac{\Sigma_t}{\Pi_t+2}\theta_t\mathbb{E}\bar x_t\!-\!\frac{1}{\Pi_t+2}\theta_t\rho_t\Big\}\mathrm{d}t\\
      &\quad-\Big\{\varphi_t\bar x_t-\frac{\Sigma_t}{\Pi_t+2}\varphi_t\mathbb{E}\bar x_t-\frac{1}{\Pi_t+2}\varphi_t\rho_t\Big\}\mathrm{d}W_t,\\
      &\bar x_0=a,
    \end{aligned}
  \right.
\end{equation}
and
\begin{equation}
  \label{eq025}
  \bar u_t= -\tilde R^{-1}_tB^{\top}_t\left(\bar x_t+\frac{\Sigma_t}{\Pi_t+2}\mathbb{E}\bar x_t+\frac{1}{\Pi_t+2}\rho_t\right),
\end{equation}
where the equations corresponding to \eqref{eq019}-\eqref{eq021} are given by:
\begin{equation}
  \label{eq026}
  \left\{
    \begin{aligned}
      &\dot{\Pi_t}+(2A_t-\theta_t)\Pi_t-2\theta_t+4A_t = 0,\\
      &\Pi_T = 0,
    \end{aligned} 
  \right.
\end{equation}
\begin{equation}
  \label{eq027}
  \left\{
    \begin{aligned}
      &\dot{\Sigma_t}+2(A_t-\theta_t)\Sigma_t-\frac{1}{\Pi_t+2}\theta_t\Sigma_t^2 = 0,\\
      &\Sigma_T = -2,
    \end{aligned} 
  \right.
\end{equation}
and
\begin{equation}
  \label{eq028}
  \left\{
    \begin{aligned}
      &\dot \rho_t+\left[A_t-\left(1+\frac{\Sigma_t}{\Pi_t+2}\right)\theta_t\right]\rho_t=0,\\
      &\rho_T=-\lambda.
    \end{aligned} 
  \right.
\end{equation}
It is worth noting that \eqref{eq026} and \eqref{eq028} are linear ordinary differential equations, and \eqref{eq027} is a Bernoulli equation. Therefore, their explicit solutions can be obtained as follows:
\begin{equation}
  \label{eq029}
  \left\{
    \begin{aligned}
      &\Pi_t = 2e^{\int^T_t(2A_s-\theta_s)\mathrm{d}s}-2,\quad \Sigma_t = -2e^{\int^T_t(2A_s-\theta_s)\mathrm{d}s},\\
      &\rho_t = -\lambda e^{\int^T_tA_s\mathrm{d}s},\quad t\in [0,T].
    \end{aligned}
  \right.
\end{equation}
Substituting \eqref{eq029} into \eqref{eq024}, we derive
\begin{equation*}
  \left\{
    \begin{aligned}
      &\mathbb{E}\bar x_t = ae^{\int^t_0A_s\mathrm{d}s}+\frac{\lambda}{2}e^{\int^T_t(\theta_s-A_s)\mathrm{d}s}\left(e^{\int^t_0\theta_s\mathrm{d}s}-1\right),\\
      &Var[\bar x_t] \!=\! \mathbb{E}[(\bar x_t-\mathbb{E}\bar x_t)^2] \!=\! \frac{\lambda^2}{4}e^{2\int^T_t(\theta_s-A_s)\mathrm{d}s}\!\left(e^{\int^t_0\theta_s\mathrm{d}s}\!-1\!\right).
    \end{aligned}
  \right.
\end{equation*}
Thus, the efficient frontier is 
\begin{equation*}
  Var[\bar x_T] = \frac{(\mathbb{E}\bar x_T-ae^{\int^T_0A_t\mathrm{d}t})^2}{e^{\int^T_0\theta_t\mathrm{d}t}-1}.
\end{equation*}
\begin{remark}
  \label{re4_3}
  The efficient frontier in the mean-standard-deviation diagram is still a straight line. To be specific, we have
  \begin{equation*}
    \mathbb{E}\bar x_T = ae^{\int^T_0A_t\mathrm{d}t} + \sqrt{e^{\int^T_0\theta_t\mathrm{d}t}-1}\sqrt{Var[\bar x_T]},
  \end{equation*}
  which is also called the capital market line. 
\end{remark}

Furthermore, by virtue of \eqref{eq024}, \eqref{eq025}, and \eqref{eq029}, we can obtain that the explicit expression 
for the investor's optimal total wealth $\bar{x}_{\cdot}$ is given by 
\begin{equation}
  \label{eq30}
  \begin{aligned}
    &\bar x_t = ae^{\int^t_0A_s\mathrm{d}s}+\frac{\lambda}{2}e^{\int^T_t(\theta_s-A_s)\mathrm{d}s}\\
    &\quad\text{ }\text{ }\times \bigg\{e^{\int^t_0\theta_s\mathrm{d}s}+\int^t_0\left[e^{-\frac{1}{2}\int^t_s\theta_r\mathrm{d}r-\int^t_s\varphi_r\mathrm{d}W_r}\theta_s\right]\mathrm{d}s \\
    &\quad\text{ }\text{ }+\!e^{-\!\int^t_0\!\varphi_s\mathrm{d}W_s}\!\int^t_0\!\left[e^{-\frac{1}{2}\int^t_s\theta_r\mathrm{d}r+\int^s_0\varphi_r\mathrm{d}W_r}\varphi_s\right]\!\mathrm{d}W_s\!-\!1\!\bigg\}, 
  \end{aligned}
\end{equation}
and the feedback representation for the optimal portfolio strategy $\bar{u}_{\cdot}$ is given by 
\begin{equation}
  \label{eq31}
  \bar u_t=-\tilde R_t^{-1}B_t^{\top}\left(\bar x_t-\mathbb{E}\bar x_t-\frac{\lambda}{2}e^{\int_t^T(\theta_s-A_s)\mathrm{d}s}\right).
\end{equation}

\begin{figure*}[t]
\centering
\includegraphics[width=0.9\textwidth]{Figure_1.eps}
\caption{Effect of the risk-free interest rate on optimal wealth and investment strategies: based on the parameters in \eqref{eq32}, the rate is set to $0.8r_t$, $r_t$, and $1.2r_t$, with all other parameters unchanged; solid lines and shaded areas represent sample means and $10$th--$90$th percentile bands, respectively.}
\label{fig1}
\end{figure*}

\begin{figure*}[t]
\centering
\includegraphics[width=0.81\textwidth]{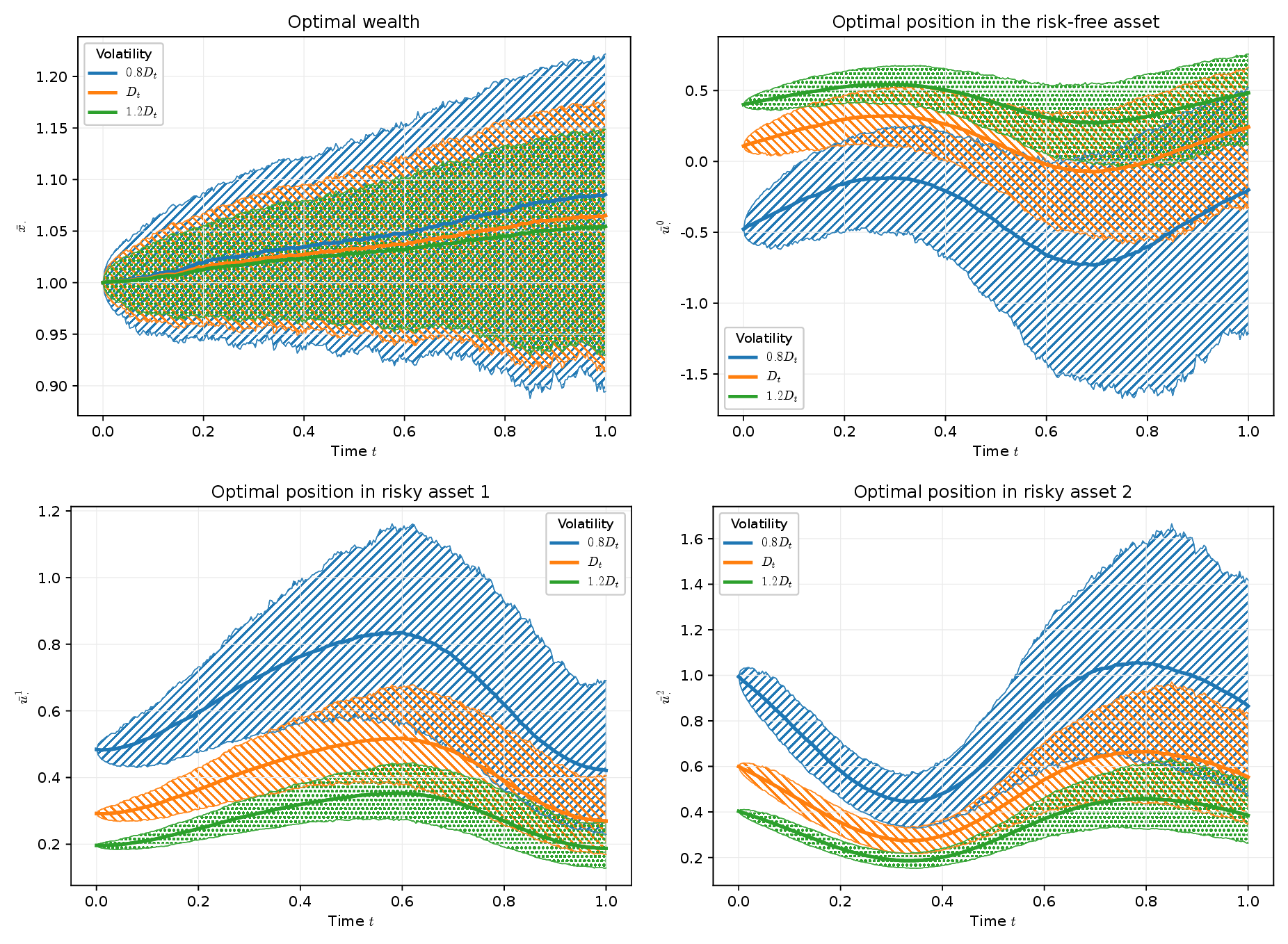}
\caption{Effect of volatility on optimal wealth and investment strategies: based on the parameters in \eqref{eq32}, the volatility matrix is set to $0.8D_t$, $D_t$, and $1.2D_t$, with all other parameters unchanged; solid lines and shaded areas represent sample means and $10$th--$90$th percentile bands, respectively.}
\label{fig2}
\end{figure*}

\begin{figure*}[t]
\centering
\includegraphics[width=0.81\textwidth]{Figure_3.eps}
\caption{Effect of $\lambda$ on optimal wealth and investment strategies: based on the parameters in \eqref{eq32}, the parameter is set to $0.8\lambda$, $\lambda$, and $1.2\lambda$, with all other parameters unchanged; solid lines and shaded areas represent sample means and $10$th--$90$th percentile bands, respectively.}
\label{fig3}
\end{figure*}

To better illustrate our results, we present some numerical simulations below. Suppose that
\begin{equation}
  \label{eq32}
  \left\{
    \begin{aligned}
      & T=1,\quad a=1,\quad d=2,\quad \lambda=0.6,\\
      & A_t = r_t = 0.03+0.01\sin(2\pi t),\\
      & B_t = (b_t^1,\text{ } b_t^2) \\
      & \quad\text{ } = (0.06+0.01\sin(2\pi t),\text{ } 0.07+0.01\cos(2\pi t)),\\
      & D_t = \begin{pmatrix} \sigma^{11}_t & \sigma^{12}_t\\ \sigma^{21}_t & \sigma^{22}_t \end{pmatrix}\\
      & \quad\text{ } = \begin{pmatrix} 0.12+0.01\sin(2\pi t) & 0.03+0.01\cos(2\pi t)\\ 0.02+0.01\sin(2\pi t) & 0.14+0.01\cos(2\pi t) \end{pmatrix}\!.
    \end{aligned}
  \right.
\end{equation}
Based on \eqref{eq30} and \eqref{eq31}, we generate sample paths using the Monte Carlo method to examine the sensitivity of optimal wealth and investment strategies to the risk-free interest rate, volatility, and the parameter $\lambda$. \Cref{fig1} shows that an increase in the risk-free interest rate reduces the mean optimal positions in the two risky assets and increases the mean position in the risk-free asset, while slightly lowering the sample mean of optimal terminal wealth. This indicates that, when the expected returns of the risky assets remain unchanged, a higher interest rate reduces risk premia and prompts investors to reduce their allocations to risky assets. \Cref{fig2} shows that, as volatility increases, the mean optimal positions in the two risky assets and the sample mean of optimal wealth decrease, whereas the mean position in the risk-free asset increases and the $10$th--$90$th percentile bands for wealth and all asset positions narrow. This indicates that, in response to higher risky-asset volatility, investors typically reduce their risky-asset holdings and increase their allocation to the risk-free asset, thereby reducing the volatility of total wealth. The parameter $\lambda$ reflects the importance investors attach to expected returns. \Cref{fig3} shows that, as $\lambda$ increases, the mean optimal positions in the two risky assets and the sample mean of optimal wealth increase, whereas the mean position in the risk-free asset decreases and the $10$th--$90$th percentile bands for wealth and all asset positions widen. Thus, investors must accept greater wealth fluctuations to achieve higher expected returns. The capital market line in \Cref{re4_3} likewise indicates a positive relationship between expected wealth and risk.

\section{Conclusion}\label{sec4}
Starting from the mean-variance portfolio selection problem, this paper establishes a connection between mean-field systems and forward-backward stochastic systems, and formulates a class of control problems for forward-backward systems with coupled initial-terminal values. For this coupling structure, we establish an SMP for control problems with a running cost and coupled initial-terminal terms, and provide sufficient conditions for optimality under appropriate convexity assumptions. Furthermore, we study a class of generalized stochastic LQ control problems, allowing the control weight in the running cost to be indefinite, and derive a feedback representation of the optimal control, with the classical mean-variance model included as a special case of this framework. The corresponding efficient investment strategy and efficient frontier are characterized explicitly, and numerical simulations further illustrate the dynamic features of the optimal wealth and asset allocation, as well as their responses to changes in interest rates, volatility, and $\lambda$. Future work may investigate the connection between conditional mean-field systems and forward-backward stochastic systems, thereby extending the present framework to practical financial problems that can be formulated as conditional mean-field control problems under partial information or in more general settings.

\printcredits

\section*{Declaration of competing interest}
The authors declare that they have no known competing financial interests or personal relationships that could have appeared to influence the work reported in this paper.

\section*{Data availability}
  No data were used for the research described in the article.

\section*{Acknowledgments}
The authors are grateful for the National Key R\&D Program of China [grant number 2023YFA1009200]; the Natural Science Foundation of China [grant numbers 125B2025, 12521001, 62561160159]; and the Taishan Scholars Climbing Program of Shandong [grant number TSPD20210302].

\end{document}